\theoremstyle{plain}
\newtheorem{theorem}{Theorem}[section]
\theoremstyle{definition}
\newtheorem{definition}[theorem]{Definition}
\newtheorem{proposition}[theorem]{Proposition}
\newtheorem{remark}[theorem]{Remark}
\newtheorem{example}[theorem]{Example}
\newtheorem{problem}{Problem}
\renewcommand{\r}{\mathbb{R}}
\newcommand{\z}{\mathbb{Z}}
\newcommand{\discr}{discr}
\def\@captionfont{\normalfont\footnotesize}\makeatother
\title{Liftings of surfaces in the plane}
\author[Oleg Karpenkov,
Brigitte Servatius, Herman Servatius]{Oleg Karpenkov,
Brigitte Servatius, Herman Servatius}
\address{Oleg Karpenkov\\University of Liverpool
}
\email{karpenk@liverpool.ac.uk}
\address{Brigitte Servatius\\
Worcester Polytechnic Institute}
\email{bservat@wpi.edu}
\address{Herman Servatius\\
Worcester Polytechnic Institute}
\email{hservat@wpi.edu}
\thanks{
The collaborative research on this article was supported by the National Science Foundation under Grant No. DMS-1929284 while the authors were in residence at the Institute for Computational and Experimental Research in Mathematics in Providence (ICERM), RI, during the Geometry of Materials, Packings and Rigid Frameworks
Jan 29 - May 2, 2025 program.
}
\begin{document}

\begin{abstract}
In this note we provide a topological definition of Maxwell-Cremona liftings for non-planar frameworks of surfaces (both oriented and non-oriented). In the non-oriented case we give an estimate on the dimension of self-stresses, when the frameworks will posses a non-trivial lifting.
\end{abstract}

\maketitle
\tableofcontents

\section*{Introduction}

In this paper we study liftings of non-planar frameworks in topological terms. In order to construct such liftings we consider polygonal surfaces whose 1-skeletons coincide with the framework
The construction works for both oriented and non-oriented surfaces.

\vspace{2mm}

Liftings were introduced by J.~C.~Maxwell~\cite{maxwell1864xlv,maxwell1870reciprocal} in 1864 and further developed by L.~Cremona~\cite{cremona1872figure} in 1872.
Later in 1982 W.~Whiteley~\cite{Whiteley1982} proved that
every reciprocal diagram and every lifting corresponds to an equilibrium stress (see also~\cite{CW1993}).
Since then liftings were used in various branches of mathematics including rigidity theory, discrete and computational geometry, and polyhedral combinatorics.
In particular, the solution to the Carpenter's Rule problem~\cite{Con03} and the proof of Steinitz's Theorem~\cite{richter2006realization} in polyhedral combinatorics substantially use liftings (see~\cite{EadesGarvan} and~\cite{Pak} for further details).
In 2014 C.~Borcea and I.~Streinu discussed liftings for planar periodic frameworks, see~\cite{borcea2014liftings,BorStr}.
The link to periodic cubic irrationalities was developed in~\cite{KFMS2024}.
Reciprocal diagrams of toroidal frameworks are studied by J.~Erickson and P.~Lin in ~\cite{ErLin}.
Finally, we would like to mention a generalization of liftings to an arbitrary dimension constructed in~\cite{KFMS2023}.

\vspace{2mm}

The liftings for non-planar realizations of graphs were not known till recent. The first extension of the definition of liftings to the non-planar case was obtained by Z.~Cooperband, R.~Ghrist and J.~Hansen in a preprint 2023~\cite{cooper}. They describe liftings for planar frameworks using cosheaf homology, which extend the definition to liftings to oriented surfaces.
In particualr they have proven that if the dimension of the space of self-stresses for a graph corresponding to an oriented surface of genus $g$ exceeds $6g$ then there exists a surface lifting of this graph.

\vspace{2mm}

In this paper we develop an alternative combinatorial approach to liftings. We develop an explicit formula to write down liftings for both oriented and non-oriented surfaces.
As a consequence we show that a non-oriented surface with holes $S$ has a lifting provided that the dimension of the space of self-stresses exceeds $3b_1(S)$, where $b_1(S)$ denotes the first  Betti number of $S$.

\vspace{2mm}

\noindent
{\bf Organization of the paper.}
We start in Section~\ref{Notions and definitions} with general definitions of polygonal surfaces and self-stresses
Further, in Section~\ref{Lifts for oriented face-paths}
we introduce the notion of lifts along face-paths in polyhedral surfaces in $\r^2$ defined by self-stresses in edges.
We prove that the lifts defined by two homotopic face-paths coincide.
In Section~\ref{Liftings of surfaces}
we introduce the notion of liftings for frameworks whose edges may self-intersect, and study its basic properties. In particular we provide an estimate on the dimension of self-stresses, when the frameworks posses non-trivial liftings (Theorem~\ref{betti-dimension}).

\section{Self-stresses and their liftings in the classical settings}
\label{Notions and definitions}

\subsection{Self-stressed frameworks}

\label{def:framework}
Consider an arbitrary  finite, connected graph $G = (V, E)$ without loops and multiple edges. Here
$V = \{v_1, \ldots, v_d\}$ is the set of
\emph{vertices} and $E$ is the set of \emph{edges}.
Denote by $v_i v_j$ the edge joining $v_i$ and $v_j$.

\begin{definition}
Let $G = (V, E)$ be a graph as above.

\begin{itemize}

\item
A \emph{framework} $G(p)$ in $\r^2$ is a $d$-tuple of points $(p_1,\ldots, p_n)\in(\r^2)^d$.
The points $p_i$ and $p_j$ are {\it connected by an edge} (i.e, by a straight line segment) if
$v_i v_j$ is an edge of $G$.

\item A \emph{stress} $w$ on a framework $G(p)$ is an assignment of real scalars $\omega_{ij}$ to its edges $p_i p_j$. Here we assume that $w_{ij} = w_{ji}$.

\item A stress $\omega$ is called a \emph{self-stress} if the
      equilibrium condition
      \begin{equation*}
        \sum\limits_{\{j \mid v_iv_j\in E\}} \omega_{ij} (p_i - p_j) = 0
      \end{equation*}
      is fulfilled at every vertex $i$.

    \item A pair $(G(p), w)$ is called a \emph{tensegrity} if $\omega$ is a self-stress  for the framework $G(p)$.
  \end{itemize}
\end{definition}

\subsection{A classical notion of lifting for planar frameworks}
A framework in $\r^n$ is planar two of its edges intersect if and only if they are adjacent to the same vertex, this vertex is the only point of intersection of such edges.

Let us recall the classic definition of liftings.
\begin{definition}
  \label{def:liftings}
  Consider a planar embedding $G(p)$ with straight edges in $\r^2$.
  A continuous and piecewise affine function $L: \r^2 \to \r$ is called a
  \emph{lifting} of $G(p)$ if its singular set is contained in the framework $G(p)$.
\end{definition}

In this paper we extend the notion of liftings to the case of non-planar frameworks and even to non-planar graphs.

\newpage

\section{Lifts for oriented face-paths of planar surfaces}
\label{Lifts for oriented face-paths}

\subsection{Polygonal surfaces}

We say that a polygon in $\r^n$ is {\it flat} if all its vertices are contained in one two-dimensional plane and its boundary does not have self-intersections. (The flat polygons are not assumed to be convex.)
A {\it polygonal surface $($with boundary$)$ in $\r^n$} is a mapping of  a CW-complex of a surface $($with boundary$)$ to $\r^n$, $n\ge 2$ such that each face corresponds to a flat polygon. 

\vspace{2mm}

We say that the framework defined by the 1-skeleton of a polygonal surface 
$S$ is the {\it framework} for $S$, we denote it by $F(S)$.

\begin{remark}
We work mostly with the case $n=2$. Note that self-intersections are allowed (so the surfaces are not really subsets of $\r^n$). Furthermore, we restrict ourselves to combinatorial types of compact surfaces whose boundaries are homeomorphic to a disjoint collections of circles.
\end{remark}


\subsection{Oriented face-paths}

In this subsection we define face paths and the orientation for them.

\begin{definition}
Let $S$ be a polygonal surface in $\r^2$ and let $N$ be a positive integer.
\begin{itemize}
\item
A {\it face-path}  $\gamma(F,E)$ of $S$ of length $N$ from a face $f_0$ to a face of $f_N$ is a sequence of $F=(f_0,f_1,\ldots,f_N)$ together with a sequence of edges $E=(p_1q_1,\ldots, p_Nq_N)$ such that for all admissible $i$ the edge $p_iq_i$ is a common edge of faces $f_i$ and $f_{i+1}$.

\vspace{1mm}

\item
In case if $f_0$ and $f_N$ are the same, we say that a face-path is a {\it face-loop}.
\end{itemize}
\end{definition}

Let us recall the following general definitions.

\begin{definition}\quad
\begin{itemize}
\item We say that a face is {\it oriented} if its edges are oriented and they form an oriented loop.

\vspace{1mm}

\item We say that the orientations of two faces $f_1$ and $f_2$ {\it agree} at their common edge $e$, if the oriented edge loops for $f_1$ and for $f_2$ pass $e$ in opposite directions.
\end{itemize}
\end{definition}

\begin{definition}
We say that a face-path $\gamma(F,E)$ is {\it oriented} if every one of its faces and and every one of its edges is oriented  such that
\begin{itemize}
\item the orientations of consecutive faces $f_i$ and $f_{i+1}$ agree at $p_iq_i$;

\vspace{1mm}

\item the orientation of $p_iq_i$ is induced by the orientation of $f_{i-1}$,
\end{itemize}
for all admissible $i$.
\end{definition}

\begin{remark}
For non-oriented surfaces the path may come several times to the same face $f$, possibly taking opposite orientations. We allow to do so, as we orient faces of the path rather than the faces of the surface itself.
\end{remark}




\subsection{Lifts along oriented face-paths}

Let us fix the following notation.

\begin{definition}
Let $S$ be a polygonal surface in $\r^2$, and let $\gamma$ be an oriented  face-path from $f_0$ to $f_1$ consisting of two faces with a common edge $p_1q_1$.
The {\it elementary lift} for the neighboring faces $f_i$ and $f_{i+1}$ is a linear function $L_{f_i}^{f_{i+1}}$ defined as follows:
$$
L_{f_i}^{f_{i+1}}: p \mapsto \det(p_1{-}q_1,p_1{-}p) \cdot w(p_iq_i)\cdot \rho_{\discr} (f_0,f_{1}),
$$
where
$$
\rho_{\discr}(f,g)=
\left\{
\begin{array}{ll}
1, & \hbox{if $f$ and $g$ are different faces of $S$,}\\
0, & \hbox{otherwise.}
\end{array}
\right.
$$

\end{definition}

We are ready to give the definition of a lift corresponding to an oriented face-path for a surface.

\begin{definition}
Let $S$ be a polygonal surface, and let $\gamma$ be an oriented face-path from $f_0$ to $f_N$.
Consider a framework $F(S)$ with a self-stress $w$ on it.
We say that the {\it lift}  from $f_0$ to $f_N$
with respect to $\gamma$ and $w$ is the following linear function:
$$
\sum\limits_{i=1}^N L_{f_i-1}^{f_{i}}.
$$
We denote it by $\tau_{\gamma,w}(f_0,f_N)$.
\end{definition}


\subsection{Homotopy invariance of lifts}

It turns out that the function $\tau_{\gamma,w}(f,f')$ does not depend on the choice of a path in the homotopy class of face-paths.

\begin{theorem}\label{homotopic paths}
The lift $\tau_{\gamma,w}(f,f')$ does not depend on the choice of a face-path $\gamma$ from $f$ to $f'$ in the class of homotopic oriented face-paths.
\end{theorem}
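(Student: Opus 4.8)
The plan is to reduce the statement to a local check at a single vertex of the surface, since any two homotopic face-paths differ by a finite sequence of elementary homotopies, and the only non-trivial such move is pushing a sub-path across a single interior vertex $v$ of $S$ (appending or deleting a backtrack $f_i, f_{i+1}, f_i$ along one edge is trivially lift-neutral because $\rho_{\discr}$ and the determinant change sign together, and in fact $L_{f_{i+1}}^{f_i} = -L_{f_i}^{f_{i+1}}$). So the heart of the argument is: if $\gamma$ and $\gamma'$ agree outside a small disk around $v$, enter the star of $v$ at a face $g$ across edge $e$ and leave at a face $g'$ across edge $e'$, and inside the disk $\gamma$ goes around one way while $\gamma'$ goes around the other way, then the two partial lifts from $g$ to $g'$ through $\mathrm{star}(v)$ are equal. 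Equivalently, the lift around a full face-loop encircling $v$ once is zero.

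First I would set up coordinates with $v$ at the origin and label the edges emanating from $v$ cyclically as $e_1, \dots, e_k$ with the incident faces $g_1, \dots, g_k$ (so $g_j$ lies between $e_j$ and $e_{j+1}$, indices mod $k$). The claim to prove is $\sum_{j} L_{g_j}^{g_{j+1}} = 0$ as a linear function, where each elementary lift is $L_{g_j}^{g_{j+1}}: p \mapsto \det(v - q_j, v - p)\, w(e_j)\, \rho_{\mathrm{discr}}(g_j, g_{j+1})$, with $q_j$ the far endpoint of $e_j$. When all faces around $v$ are distinct the $\rho_{\discr}$ factors are all $1$, and because each $L_{g_j}^{g_{j+1}}$ is linear in $p$ and vanishes at $p = v$, the whole sum is a linear function vanishing at $v$; hence it suffices to show its gradient is zero. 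The gradient of $p \mapsto \det(v - q_j, v - p)$ is (up to sign and a rotation by $90^\circ$) the vector $v - q_j$, i.e. the edge vector along $e_j$ scaled by its length — precisely the quantity $p_i - p_j$ appearing in the equilibrium condition. Summing $w(e_j)(v - q_j)$ over all edges $e_j$ at $v$ is exactly the equilibrium equation at the vertex $v$, which holds because $w$ is a self-stress on $F(S)$. This is the crux: the homotopy invariance is the integrated form of the vertex equilibrium condition.

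Next I would address the orientation bookkeeping, which is where care is needed: the definition of an oriented face-path prescribes that the orientation of $p_iq_i$ is induced by $f_{i-1}$ and that consecutive faces agree at the shared edge, so as the path winds once around $v$ the induced signs of the successive elementary lifts are consistent with a single choice of "rotational" orientation around $v$, and the telescoping of the $\det$ terms produces exactly $\sum_j \pm w(e_j)(v - q_j)$ with a globally consistent sign. I would verify that the sign attached to $e_j$ is independent of $j$ (this is what "oriented face-path" buys us), so the sum is $\pm$ the equilibrium sum and therefore zero. For the case where the path revisits a face around $v$ (possible on a non-orientable surface, per the Remark), the factor $\rho_{\discr}$ kills exactly the repeated-face contributions; I would check that after deleting those terms the remaining contributions still telescope to a sub-sum of the equilibrium equation, or more cleanly, argue that $\rho_{\discr}$ was designed so that the sum over any consecutive run equals the sum one would get from the "reduced" face-sequence, and invoke the distinct-faces computation.

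The main obstacle I anticipate is not the algebra — the gradient computation reducing to vertex equilibrium is short — but rather the combinatorial-topological setup: making precise that two homotopic oriented face-paths are connected by moves each supported near a single vertex (or a single face, for backtracks), and that each such move preserves orientability of the path in the sense required by the definition. In particular one must handle homotopies that cross an edge's interior (trivial backtrack cancellation) versus those that cross a vertex (the equilibrium computation), and confirm these generate all homotopies of face-paths; this is a standard but slightly delicate fact about the dual CW-structure, and I would state it as a lemma and prove it by induction on the combinatorial area swept by the homotopy.
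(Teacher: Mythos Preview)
Your proposal is correct and follows essentially the same route as the paper: reduce to elementary moves (backtracks and pushing across a vertex), dispose of backtracks via $L_{f_{i+1}}^{f_i}=-L_{f_i}^{f_{i+1}}$ (the paper phrases this slightly differently, inserting $g,g$ and using $\rho_{\discr}(g,g)=0$ plus the fact that at most two faces meet along an edge), and then show that the sum of elementary lifts around a single vertex vanishes because it is exactly the equilibrium condition at that vertex. Your gradient computation, observing that each $p\mapsto\det(v-q_j,v-p)$ vanishes at $v$ and has gradient a fixed rotation of $v-q_j$, makes the link to equilibrium more explicit than the paper's one-line ``corresponds to the equilibrium condition''; and your caution about the combinatorial lemma (that elementary moves of these two types generate homotopy of face-paths) is well placed, since the paper simply asserts it.
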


\begin{proof}
Consider two homotopic oriented face-paths. Then, by the definition, one of them can be taken to the other with a sequence of elementary path changes:

\begin{itemize}
\item {\bf First elementary move:} adding/removing two copies of a single face adjacent to the same $p_iq_i$:
$$
(f_1,\ldots, f_i,f_{i+1},\ldots f_N) \to (f_1,\ldots, f_i,g,g, f_{i+1},\ldots,f_N).
$$
Here $(f_i,g)$ and $(g,f_{i+1})$ both share the same edge $p_iq_i$.
The pair $(g,g)$ shares any edge $\hat p \hat q$ of $g$.

\vspace{2mm}

\item {\bf Second elementary move:} adding/removing a loop of faces around a vertex:
$$
(f_1,\ldots, f_i,f_{i+1},\ldots, f_N) \to (f_1,\ldots, f_i,g_1,\ldots, g_k, f_{i+1},\ldots, f_N).
$$
Where $(g_1,\ldots, g_k)$ is a loop of faces around either $p_i$ or $q_i$; here we assume that $g_1$ and $g_k$ share the edge $p_iq_i$.
The faces $g_s$ and $g_{s+1}$ shares the edge $p_i\hat q_s$
(or respectively $\hat p_s q_i$). Here we assume that $p_i\hat q_1=p_i q_i=p_i\hat q_k$ or $\hat p_1 q_i=p_i q_i=\hat p_k q_i$ respectively.

\end{itemize}

\vspace{2mm}

\noindent
{\it First elementary move invariance.}
We replace
$$
L_{f_i}^{f_{i+1}} \to
L_{f_i}^{g} + L_g^g+L_{g}^{f_{i+1}}.
$$
First of all $L_g^g=0$. Hence
$$
L_{f_i}^{g} + L_g^g+L_{g}^{f_{i+1}}=L_{f_i}^{g} +L_{g}^{f_{i+1}}.
$$
Both three faces $g$, $f_i$, and $f_{i+1}$ share the same edge. Hence at least two of them coincide. Direct computation in all four possible cases result in the equality:
$$
L_{f_i}^{f_{i+1}}=L_{f_i}^{g} + L_{g}^{f_{i+1}}.
$$

\vspace{2mm}

\noindent
{\it Second elementary move invariance.} For the second elementary move we replace
$$
L_{f_i}^{f_{i+1}} \to
L_{f_i}^{g_1}+\sum\limits_{j=1}^{k-1}L_{g_i}^{g_{i+1}}+
L_{g_k}^{f_{i+1}}.
$$

\vspace{2mm}

Let us perform the first elementary move and then collect the summands:
$$
\begin{aligned}
L_{f_i}^{g_1}+\sum\limits_{j=1}^{k-1}L_{g_i}^{g_{i+1}}+
L_{g_k}^{f_{i+1}}
&=\,
L_{f_i}^{g_1} +
\sum\limits_{j=1}^{k-1}L_{g_i}^{g_{i+1}}+L_{g_k}^{g_{1}}
+L_{g_1}^{g_{1}}+L_{g_1}^{f_{i+1}}
\\
&=\,
\big(L_{f_i}^{g_1} +L_{g_1}^{g_{1}}+L_{g_1}^{f_{i+1}} \big)
+
\Big(
\sum\limits_{j=1}^{k-1}L_{g_i}^{g_{i+1}}+L_{g_k}^{g_{1}}
\Big).
\end{aligned}
$$
The first bracket is equivalent to $L_{f_i}^{f_{i+1}}$ due to the first elementary move. The second bracket is zero, it corresponds to the equilibrium condition at the vertex around which the loop is taken (it is either $p_i$ or $q_i$).
\end{proof}

\section{Liftings for non-planar frameworks}
\label{Liftings of surfaces}

\subsection{Definition of lifting}

For a single-valued lifting we need the so-called {it monodromy conditions}: all the lifts along face-loops (having the same starting and end faces) are zero. If this is not the case one still can consider multivariate functions that will correspond to various coverings of surfaces.

\begin{definition}
Let $S$ be a polygonal surface in $\r^2$; 
let $F(S)$ be its framework; and let $w$ be a self-stress on $F(S)$.
We say that a self-stress $w$
is {\it monodromy-free}
if for any face $f$ of $S$ and any oriented face-loop $\gamma$ starting and ending at $f$ we have
$$
\tau_{\gamma,w}(f,f)=0.
$$
\end{definition}

According to Theorem~\ref{homotopic paths}
we can update the notation of lifts for monodromy-free stresses, since the lifts here are entirely the properties of the starting and the end faces, and there is no any dependency of the face-path choice.

\begin{definition}
Let $w$ be monodromy-free.
Set
$$
\tau_{w}(f,f')=\tau_{\gamma,w}(f,f'),
$$
for any oriented face-path $\gamma$ connecting $f$ and $f'$.
\end{definition}

Now let us we give a natural extension of the notion of lifting for
arbitrary frameworks (considered as 1-skeletons of some surfaces).

\begin{definition}
Consider a monodromy-free self-stress $w$ on $F(S)$.
Let $f$ be any face of $S$.
We say that the function
$$
\tau_{w,f}:S\to \r,
\quad \hbox{such that} \quad \tau_{w,f}(p)=\tau_{w}(f,f_p)(p),
$$
where $f_p$ is the face containing $p$,
is a {\it lifting} of $S$ with respect to $w,f$.
\end{definition}

\begin{figure}[t]
\includegraphics[height=4cm]{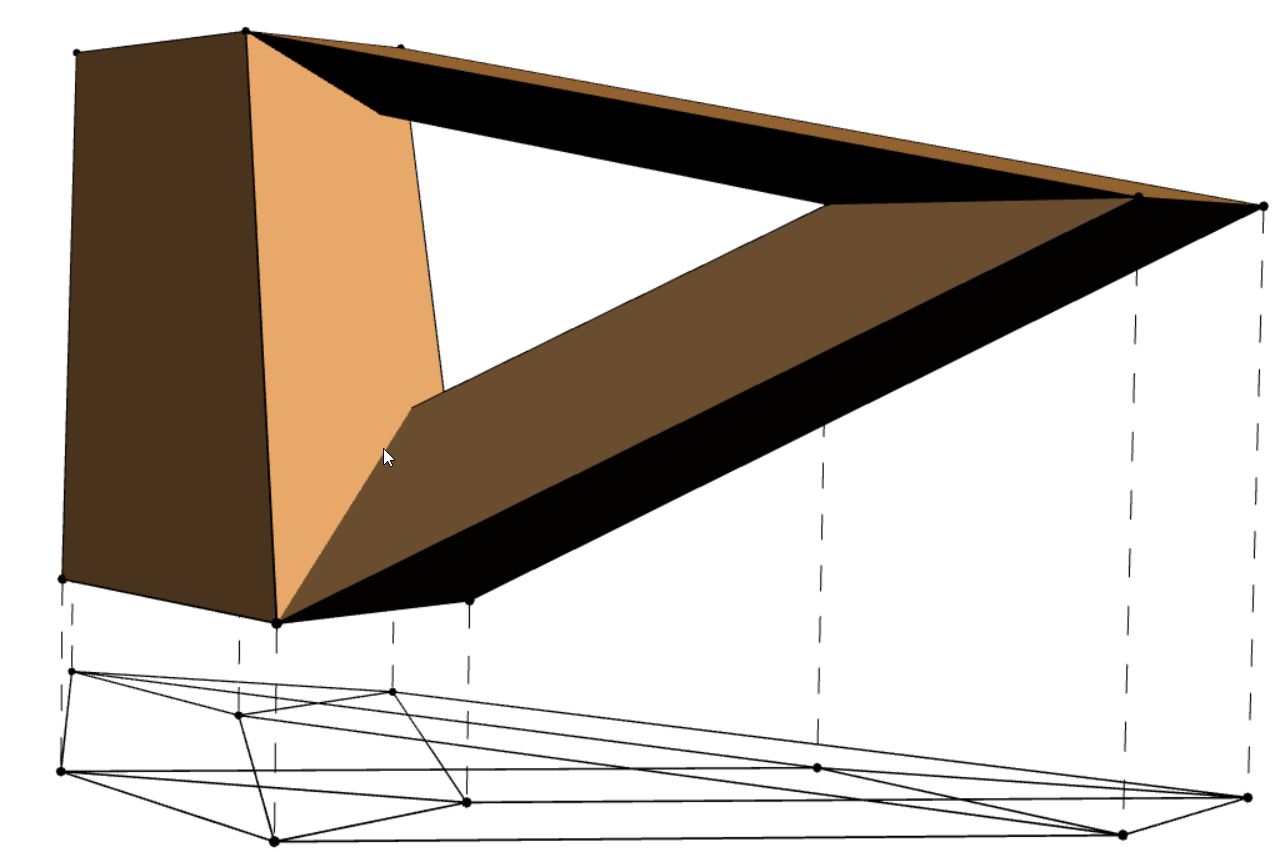}
\caption{Lifting of a framework, corresponding to a torus.
\label{torus.jpg-fig}}
\end{figure}

In Figure~\ref{torus.jpg-fig} we show an example of a monodromy-free lifting of a framework, corresponding to a torus.

\subsection{A few properties and remarks}

We start with the following important observation.

\begin{proposition}
The function $\tau_{w,f}$ is piecewise-linear; 
it is linear at every face of $S$.
\qed
\end{proposition}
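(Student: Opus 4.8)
The plan is to establish the two assertions separately: that $\tau_{w,f}$ restricts to an affine function on each face, and that these affine restrictions agree on overlaps, which is exactly what ``piecewise-linear'' means here.

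First I would fix a face $g$ of $S$. Since $w$ is monodromy-free, $\tau_w(f,g)$ is a well-defined function on $\r^2$, independent of the oriented face-path $f=f_0,\dots,f_N=g$ along which it is computed (Theorem~\ref{homotopic paths}), namely $\tau_w(f,g)=\sum_{j=1}^{N}L_{f_{j-1}}^{f_j}$. Each elementary lift has the shape $p\mapsto\det(p_j-q_j,\,p_j-p)\cdot w(p_jq_j)\cdot\rho_{\discr}(f_{j-1},f_j)$, and writing $\det(p_j-q_j,\,p_j-p)=\det(p_j-q_j,\,p_j)-\det(p_j-q_j,\,p)$ shows it is an affine function of $p$. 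A finite sum of affine functions is affine, so $\tau_w(f,g)$ is affine; and by the definition of $\tau_{w,f}$ its restriction to the face $g$ equals $\tau_w(f,g)$. This gives the ``linear on every face'' part.

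Next I would verify continuity across the skeleton. Let $e=p_iq_i$ be an edge shared by two faces $g\neq g'$. Taking any oriented face-path from $f$ to $g$ and prolonging it by the one adjacency step across $e$ (orienting $g'$ so that the prolonged sequence is again an oriented face-path) yields $\tau_w(f,g')=\tau_w(f,g)+L_g^{g'}$, where $L_g^{g'}(p)=\det(p_i-q_i,\,p_i-p)\cdot w(e)$ since $\rho_{\discr}(g,g')=1$. For a point $p=(1-t)p_i+tq_i$ on the segment $e$ one has $p_i-p=t(p_i-q_i)$, so $\det(p_i-q_i,\,p_i-p)=t\det(p_i-q_i,\,p_i-q_i)=0$; hence $L_g^{g'}$ vanishes identically along $e$, and $\tau_w(f,g)$ and $\tau_w(f,g')$ restrict to the \emph{same} affine function on $e$. (If a face borders $e$ on both sides, $\rho_{\discr}$ makes the corresponding elementary lift zero, and a boundary edge lies in a single face, so in either case nothing further is needed.) At a vertex $v$ the faces containing $v$ form a connected fan in which consecutive faces share an edge through $v$, so iterating this identity shows that all affine functions attached to faces at $v$ agree at $v$. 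Consequently $\tau_{w,f}$ is a well-defined continuous function on $S$ which is affine on each face, i.e.\ piecewise-linear.

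The one point requiring a little care is the prolongation step and the additivity $\tau_w(f,g')=\tau_w(f,g)+L_g^{g'}$: one has to respect the rules for orienting the new face and the new edge, and check the sign of the added elementary lift. This is routine once the elementary-lift formula is in hand, and the computational core of the whole argument --- that $\det(p_i-q_i,\,p_i-p)$ vanishes for $p$ on the segment $p_iq_i$, independently of all orientation choices --- is immediate, so I anticipate no serious obstacle.
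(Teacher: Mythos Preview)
Your argument is correct. The paper itself offers no proof at all for this proposition---it is stated with an immediate \qed---so your write-up simply supplies the details the authors considered evident: each elementary lift $p\mapsto\det(p_j-q_j,\,p_j-p)\cdot w(p_jq_j)\cdot\rho_{\discr}(f_{j-1},f_j)$ is affine in $p$, a finite sum of affine functions is affine, and the added term $L_g^{g'}$ vanishes along the common edge because $\det(p_i-q_i,\,p_i-p)=0$ there, giving continuity of $\tau_{w,f}$ across the $1$-skeleton.
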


\begin{remark}
In case if $w$ is not monodromy-free for $S$, one can consider any covering
of $S$ that has all the monodromies to be trivial (e.g. the universal covering).
\end{remark}

We arive to the following natural problem.

\begin{problem}
Study combinatorial conditions for a self-stress to be monodromy-free.
\end{problem}

The same question is actual for single face-loops.

\begin{remark}
Any polygonal surface immersion $S$ in $\r^3$ provides a self-stress on the framework obtained by projecting the 1-skeleton of $S$ to any plane along the direction orthogonal to the plane.
\end{remark}

\begin{remark}
Any polygonal surface immersion in $\r^3$ whose 1-skeleton projection is a covering of a framework provides a self-stress on that framework.
Here we sum up the values of stresses at all projection to the edges. Note that such self-stress might be zero at some or all edges.
\end{remark}

It is interesting to admit that different polygonal surfaces for the same graph result in different liftings.

\subsection{On monodromy-free liftings}

Let us start with the following general statement.

\begin{proposition}\label{finite-gen}
Let $S$ be a polygonal surface in $\r^2$.
Consider a lift with respect to an oriented face-loop $\gamma$ that represents a finite order element in the surface $S$.
\end{proposition}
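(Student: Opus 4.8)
Although the statement in the excerpt is truncated, the intended conclusion is that such a lift is trivial, i.e.\ $\tau_{\gamma,w}(f,f)=0$ whenever the class of $\gamma$ has finite order. The plan is to recognise the rule $\gamma\mapsto\tau_{\gamma,w}(f,f)$ as a homomorphism from the fundamental group of $S$ (based at $f$) into the additive group of affine functions $\r^2\to\r$, and then to use that this target has no torsion.

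First I would fix the face $f$ together with an orientation of it and look at oriented face-loops based at $f$ that begin and end carrying this orientation, modulo the homotopy generated by the two elementary moves from the proof of Theorem~\ref{homotopic paths}. Under concatenation these form the combinatorial face-path group of the polygonal surface, which is canonically isomorphic to $\pi_1(S,f)$ (the standard identification of the edge/face-path group of a CW-surface with its fundamental group). By Theorem~\ref{homotopic paths} the assignment $[\gamma]\mapsto\tau_{\gamma,w}(f,f)$ is well defined on this group, with values in the $3$-dimensional real vector space $A$ of affine functions $\r^2\to\r$.

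Next I would check that this assignment is a group homomorphism. Additivity, $\tau_{\gamma_1\gamma_2,w}(f,f)=\tau_{\gamma_1,w}(f,f)+\tau_{\gamma_2,w}(f,f)$, is immediate because $\tau$ is by definition the sum of the elementary lifts along the segments of the path, and the segment list of a concatenation is the concatenation of the segment lists. For inverses one checks that reversing a face-path (reversing the sequence and flipping the orientation of every face and edge) negates each elementary lift: $w(p_iq_i)$ and $\rho_{\discr}$ are symmetric, so the only sign comes from $\det(p_1-q_1,p_1-p)$, and $\det(q_1-p_1,q_1-p)=-\det(p_1-q_1,p_1-p)$; hence $\tau_{\bar\gamma,w}(f,f)=-\tau_{\gamma,w}(f,f)$. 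Now if $[\gamma]$ has order $m<\infty$ in $\pi_1(S,f)$ then $m\cdot\tau_{\gamma,w}(f,f)=\tau_{\gamma^{m},w}(f,f)$, and $\gamma^{m}$ is homotopic to the trivial loop, whose lift is $0$; so $m\cdot\tau_{\gamma,w}(f,f)=0$ in $A$. Since $A$ is a vector space over $\r$, multiplication by the nonzero integer $m$ is injective, and therefore $\tau_{\gamma,w}(f,f)=0$.

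The main obstacle I anticipate is the orientation bookkeeping in the non-oriented case: one has to make sure that concatenation of \emph{oriented} face-loops is well defined (this is why the orientation of the base face is fixed once and for all, and why it matters that the elementary moves of Theorem~\ref{homotopic paths} keep the endpoints and their orientations unchanged), and one has to be precise about the isomorphism between the combinatorial face-path group and $\pi_1(S)$. Everything else — additivity, the $\det$ sign flip, and the torsion-killing — is routine. It is worth noting that genuine torsion classes occur only for closed non-orientable surfaces, since for surfaces with boundary $\pi_1(S)$ is free; this is exactly the feature that makes the number of monodromy conditions in Theorem~\ref{betti-dimension} equal to $3b_1(S)$ rather than larger.
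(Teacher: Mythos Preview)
Your argument is correct and is essentially the paper's own: both observe that the lift along $\gamma$ traversed $m$ times is $m$ times the lift along $\gamma$, and then use that the target space of affine functions is torsion-free to conclude from $m\cdot\tau_{\gamma,w}(f,f)=0$ that $\tau_{\gamma,w}(f,f)=0$. You package this more carefully as a homomorphism $\pi_1(S,f)\to A$ and flag the orientation bookkeeping, whereas the paper simply asserts the additivity; one small slip is that when reversing an oriented face-loop you keep the face orientations fixed (only the induced edge orientations flip), but since the argument only needs positive powers this does not matter.
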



\begin{proof}
Consider any closed oriented face-path $\gamma$.
Let the lift with respect to this face-path be the linear function $ax+by+c$. Then the lift of this face-path passed $n$ times (i.e., $n\gamma$) is  $nax+nby+nc$.

Assume now that $\gamma$ represents a finite order $m$ element in the surface $S$.
 Then $m\gamma$ is homotopic to a trivial face-path.
Hence the lift along $m\gamma$ is a zero function: $mx+my+mc=0$. Therefore, $ax+by+c=0$.
This concludes the proof.
\end{proof}

Recall that we assume that the boundaries of surfaces are homeomorphic to a disjoint collection of circles.

\begin{theorem}\label{betti-dimension}
Consider a compact connected polygonal surface~$S$ with boundary in $\r^2$ corresponding to a framework $F(S)$.
Let $b_1(S)$ be the Betty number of $S$.
Assume that the dimension of the space of stresses for $F(S)$ equals to $d$.
Let $d>3b_1(S)$.
Then there exists a non-trivial monodromy-free polyhedral lifting of $F(S)$.
\end{theorem}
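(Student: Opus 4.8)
The plan is to show that the map sending a self-stress $w$ on $F(S)$ to its collection of monodromies around a generating set of face-loops is linear, and that its target space has dimension at most $3b_1(S)$; then a dimension count forces a nontrivial element in the kernel. First I would fix a face $f_0$ and a spanning tree $T$ in the dual graph of $S$ (whose vertices are faces and whose edges are the interior edges of $S$). For each interior edge $e$ of $S$ not in $T$ we obtain a fundamental face-loop $\gamma_e$ through $f_0$; by standard topology these loops generate $\pi_1(S,f_0)$, and the number of them is exactly $b_1(S)$ once one accounts for the boundary circles (a compact surface with boundary is homotopy equivalent to a wedge of $b_1(S)$ circles, and with our conventions the interior-edge count minus the dual-tree-edge count realizes this). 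The key linearity fact is immediate from the definitions in Section~\ref{Lifts for oriented face-paths}: the elementary lift $L_{f_i}^{f_{i+1}}$ is linear in $w$ (it carries the factor $w(p_iq_i)$), hence each $\tau_{\gamma_e,w}(f_0,f_0)$ is a linear function of $w$ with values in the $3$-dimensional space of affine functions $ax+by+c$ on $\r^2$.

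Next I would assemble these into one linear map
$$
\Phi: \stress(F(S)) \longrightarrow \bigoplus_{e \notin T} \{ \text{affine functions on }\r^2\}, \qquad \Phi(w) = \big(\tau_{\gamma_e,w}(f_0,f_0)\big)_{e\notin T}.
$$
The target has dimension $3b_1(S)$. By Theorem~\ref{homotopic paths} the lift along any oriented face-loop depends only on its homotopy class, and by Proposition~\ref{finite-gen}-type reasoning (extended to products) the assignment class $\mapsto$ lift is a homomorphism from $\pi_1(S,f_0)$ to the additive group of affine functions; since this target is abelian and torsion-free, it factors through $H_1(S;\z)$, and then through the free part, which is generated by the images of the $\gamma_e$. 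Consequently $\Phi(w)=0$ already implies $\tau_{\gamma,w}(f_0,f_0)=0$ for \emph{every} oriented face-loop $\gamma$ based at $f_0$; a short additional argument (conjugating a face-loop based at an arbitrary face $f$ by a face-path from $f_0$ to $f$, and using that a conjugate loop has the same lift because the conjugating segments cancel) upgrades this to monodromy-freeness at every face. Hence $\ker\Phi$ consists precisely of monodromy-free self-stresses.

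Now the dimension count: $\dim \stress(F(S)) = d > 3b_1(S) = \dim(\text{target of }\Phi)$, so $\ker\Phi \neq \{0\}$. Pick any nonzero $w \in \ker\Phi$; it is monodromy-free, so $\tau_{w,f_0}$ is a well-defined piecewise-linear lifting of $S$ by the propositions in Section~\ref{Liftings of surfaces}. It remains to check this lifting is nontrivial, i.e.\ not identically an affine function on all of $S$. If $\tau_{w,f_0}$ were a single global affine function, then all elementary lifts would have to vanish across every interior edge, which forces $w(e)=0$ on every interior edge $e$; combined with the equilibrium conditions at boundary vertices this would force $w=0$ on the boundary edges as well (here one uses connectivity of $S$ and that boundary components are circles), contradicting $w\neq 0$.

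\emph{Main obstacle.} The delicate point is not the linear algebra but the bookkeeping around the boundary: verifying that the number of independent fundamental face-loops is exactly $b_1(S)$ (so that the target dimension is $3b_1(S)$ rather than something larger coming from a naive edge count), and verifying the final nontriviality step — that a monodromy-free stress whose lifting is globally affine must be the zero stress. Both require care with how interior edges, boundary edges, and the dual spanning tree interact on a surface with boundary; the homotopy-invariance theorem does the heavy lifting for the first, but the precise identification of generators of $H_1$ with cotree edges, and the propagation of "$w=0$ on interior edges" to the boundary via equilibrium, are where the real work lies.
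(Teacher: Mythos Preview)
Your proposal is correct and follows essentially the same approach as the paper: the paper's proof is a three-line dimension count observing that each infinite-order generator of $H_1(S)$ imposes three linear conditions on the stress space (the coefficients of the affine monodromy), that finite-order generators impose nothing by Proposition~\ref{finite-gen}, and that $d>3b_1(S)$ therefore guarantees a nonzero solution. Your version is simply a more careful unpacking of the same argument---you make explicit the linear map $\Phi$, the identification of cotree dual edges with generators of $H_1$, and the passage from $\pi_1$ to $H_1$---and you additionally raise the nontriviality issue (a nonzero monodromy-free stress supported only on boundary edges would give a globally zero lifting), which the paper's proof does not address at all.
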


\begin{remark}
For the case of oriented surfaces the statement of the theorem coincides with Theorem~45 in~\cite{cooper}.
\end{remark}

\begin{proof}
Every infinite generator of $H_1(S)$ provides three linear conditions on the space of self-stresses.
We can disregard finite generators due to Proposition~\ref{finite-gen} above.
Hence we have precisely $3b_1(S)$ linear conditions on the space of stresses, providing a solution in the case of $d>3b_1(S)$.
\end{proof}

\subsection{Liftings of surfaces with zero first Betti number}

In the case when $b_1(S)=0$, all the self-stresses are monodromy-free. So we have the following statement.

\begin{theorem}\label{t2-isomorphism}
Let $S$ be a polygonal surface with $b_1(S)=0$.
Then the function
$$
\omega \to \tau_{w,f}
$$
is a bijection between the space of all stresses on $F(S)$ and the space of all piecewise linear immersions of $S$ that projects $($by forgetting the last coordinates$)$ to $F(S)$ and having the face corresponding to $f$ to be in the plane $z=0$.
\end{theorem}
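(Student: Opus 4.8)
The plan is to exhibit an explicit two-sided inverse to the map $w\mapsto\tau_{w,f}$. First I would check that the map is well-defined: when $b_1(S)=0$ every oriented face-loop represents a finite-order (indeed trivial) element of $H_1(S)$, so by Proposition \ref{finite-gen} every self-stress is monodromy-free, and hence $\tau_{w,f}$ is defined for every $w$. By the Proposition on piecewise-linearity, $\tau_{w,f}$ is a piecewise-linear function on $S$, linear on each face; composing with the graph map $p\mapsto(p,\tau_{w,f}(p))$ gives a piecewise-linear map $S\to\r^3$ whose projection to the first two coordinates is $F(S)$ and which sends the face $f$ into $z=0$ (since $\tau_w(f,f)\equiv 0$ along the trivial loop). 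One still has to argue this is an \emph{immersion}, i.e. that no face degenerates: on a face $g$ the lift is the affine function obtained by accumulating the elementary lifts $L_{f_i}^{f_{i+1}}$ along a face-path to $g$, which is affine and hence its graph is a genuine (possibly non-injective, but non-degenerate) flat polygon; I would note that degeneracy would force the whole face into a vertical plane, and track why the construction excludes this, or alternatively weaken the target to ``piecewise-linear maps'' if the immersion claim needs the flatness hypothesis on faces more carefully.

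Next I would construct the inverse. Given a piecewise-linear map $\Phi:S\to\r^3$ projecting to $F(S)$, for each face $g$ let $\Phi|_g$ be the affine function $z=a_g x + b_g y + c_g$. For an edge $p_iq_i$ shared by faces $g$ and $g'$, the two affine functions agree along the line through $p_i,q_i$, so their difference is a multiple of $\det(p_i-q_i,\,p_i-\,\cdot\,)$; define $\omega_{ij}$ to be the corresponding scalar, normalized exactly as in the definition of the elementary lift $L$ so that $\Phi|_{g'}-\Phi|_g = L_{g}^{g'}$ on that edge. This is the standard Maxwell--Cremona recipe: read off the stress from the jump in slope across each edge.

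The main content is verifying that this assignment $\omega$ is a \emph{self-stress}, i.e. that the equilibrium condition holds at every interior vertex. Here I would use the defining property that around any interior vertex $v$ the faces incident to $v$ form a face-loop, and the fact that the successive affine pieces of $\Phi$ agree on the successive edges forces the sum of the elementary lifts around that loop to vanish identically; by the ``second elementary move'' computation in the proof of Theorem \ref{homotopic paths}, this vanishing is literally the equilibrium condition at $v$. Conversely, that same bookkeeping shows $\tau_{\omega,f}$ reconstructs $\Phi$ up to an additive affine function, which is pinned down to $0$ by the normalization that $f$ lies in $z=0$; and starting from a stress $w$, reading off the slope jumps of $\tau_{w,f}$ returns $w$ by construction of the elementary lift. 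Thus the two maps are mutually inverse. The step I expect to be the genuine obstacle is the equilibrium verification at vertices on the \emph{boundary} versus interior — I would restrict the correspondence to stresses that already satisfy equilibrium at all vertices (as in the definition of self-stress, which only imposes equilibrium where the graph constrains it) and make sure the face-loop argument is applied only at vertices that actually have a full cyclic link, handling boundary vertices by noting the equilibrium condition there is automatically consistent with an arbitrary affine extension across the incident fan.
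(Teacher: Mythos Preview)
Your approach is correct and is the same as the paper's: the inverse map reads the stress on each edge from the jump in affine height of the lifting across that edge, and the forward map reconstructs the lifting from the stress via face-path lifts. The paper's own proof consists of two sentences asserting exactly these two directions without the details you supply; in particular your equilibrium verification via the face-loop around a vertex (tying it to the second elementary move) and your caveat about boundary vertices are points the paper leaves entirely implicit.
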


\begin{proof}
On the one hand, monodromy along any face-path  uniquely defines the lifting.

\vspace{2mm}

On the other hand any two adjacent faces of the lifting uniquely identify the stress of the projection on the edge between them.
\end{proof}

\begin{remark}
In particular it implies that polyhedra (that are homeomorphic polyhedra) in $\r^3$ are naturally enumerated by their projections to a plane with self-stresses equipped.  
\end{remark}

\begin{remark}
In case that  $b_1(S)>0$, one can cut the polyhedral surface along some edges to remove the infinite generators of $H_1(S)$. In particular we can remove all non-trivial loops and make the resulting surface simply-connected (we can treat it as {\it fundamental domain} of the surface).
For such fundamental domains, Theorem~\ref{t2-isomorphism} holds. 
Once we have a graph for a fundamental domain, we can uniquely reconstruct the multi-valued covering (lifting) for the original surface. 
\end{remark}

\begin{example}
Note that there are surfaces whose frameworks possess various topological types of lifting.
On Figure~\ref{4tori.jpg-fig} we show fundamental domains of four different liftings of the same torus with quadrangular faces.
\begin{figure}[t]
$$
\begin{array}{cc}
\includegraphics[height=3cm]{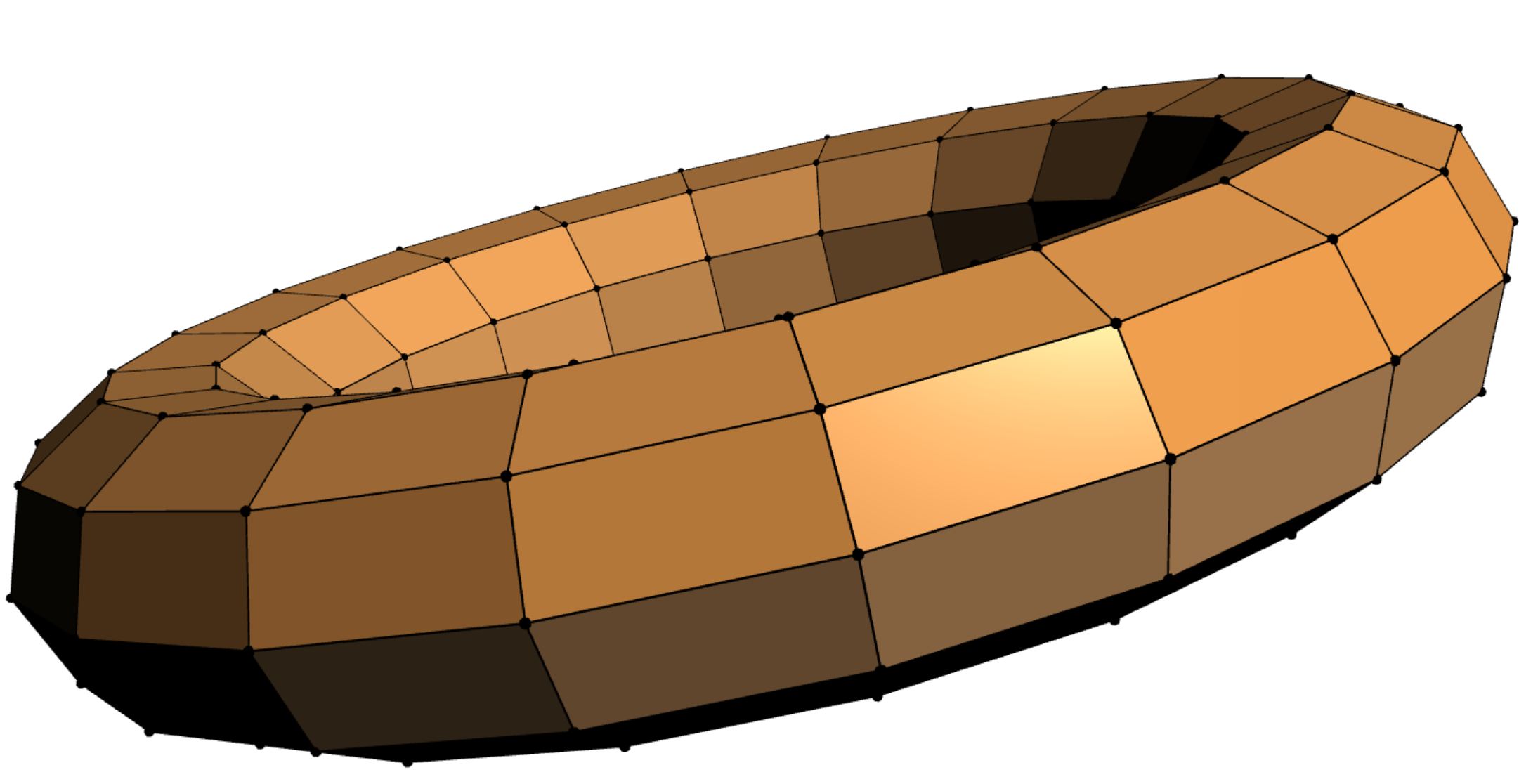}&
\includegraphics[height=4cm]{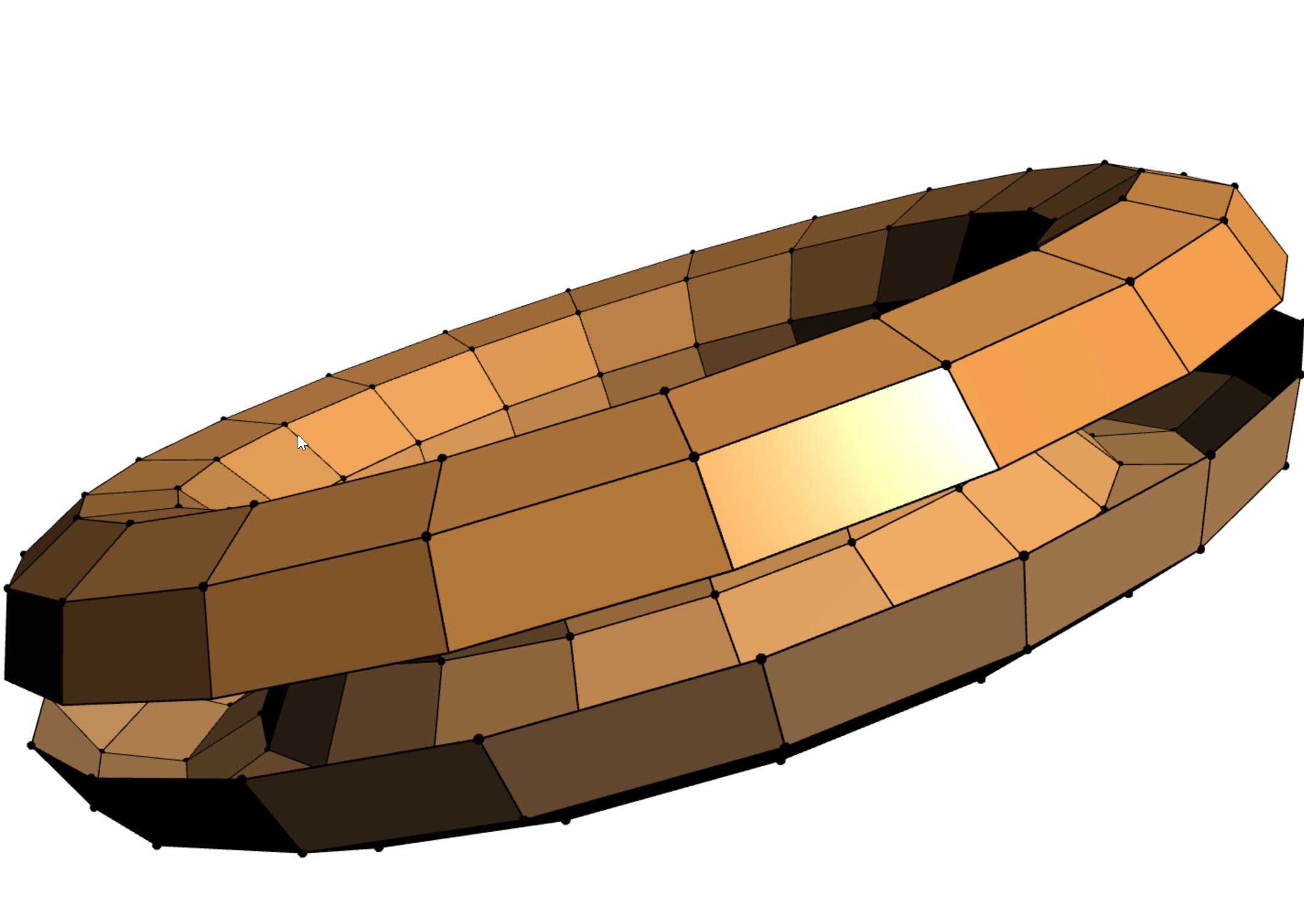}\\
\includegraphics[height=4cm]{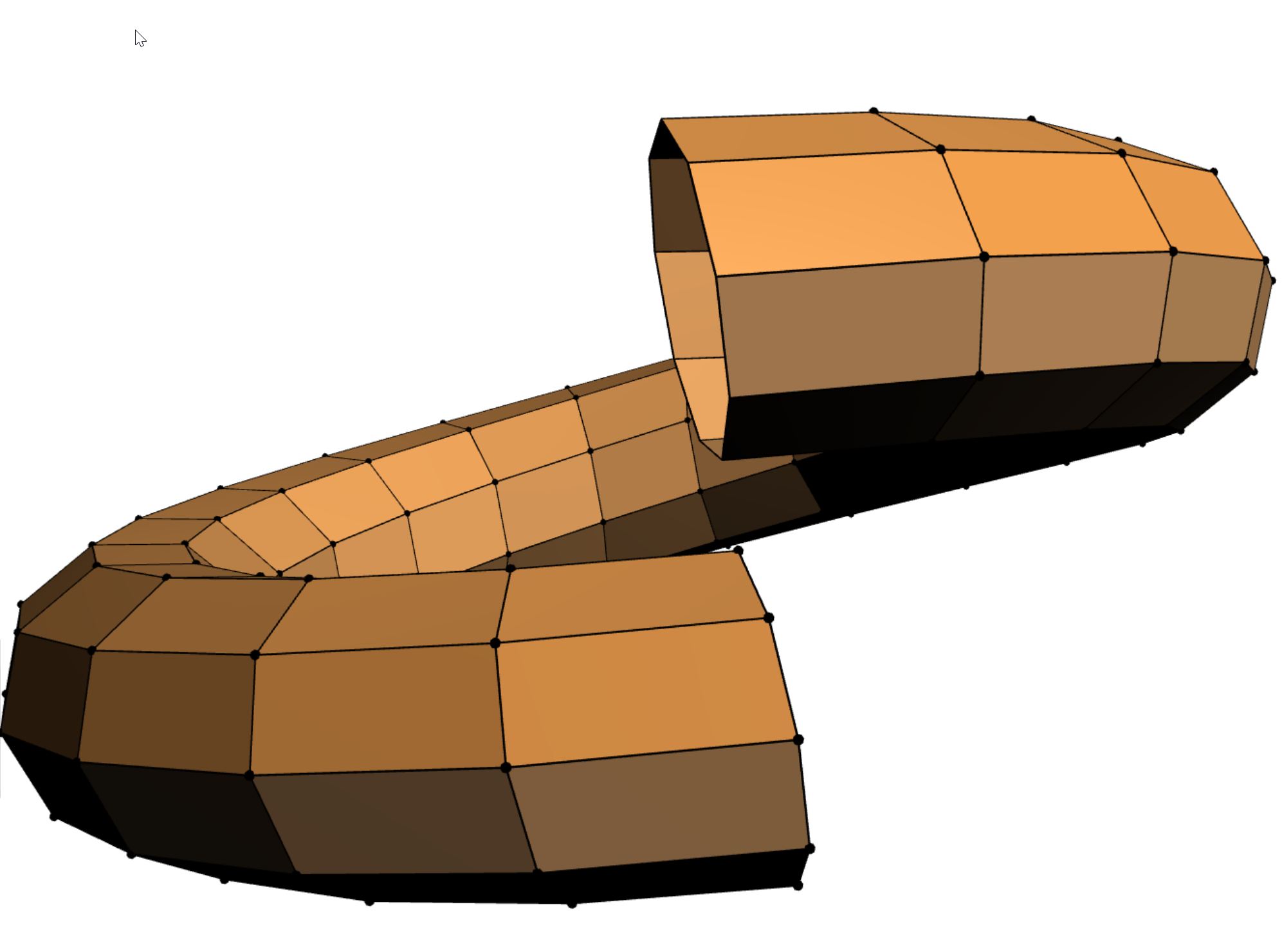}&
\includegraphics[height=4cm]{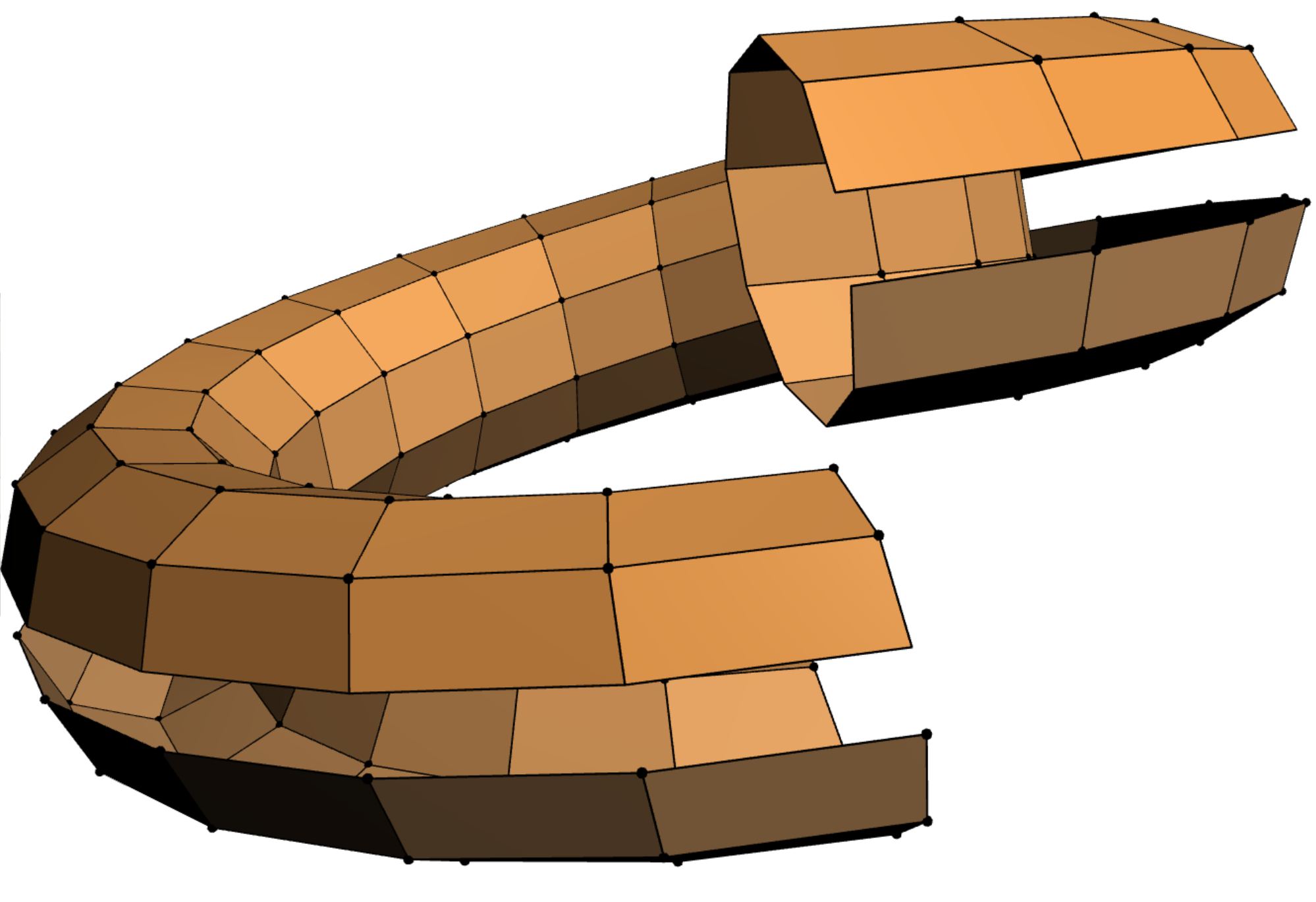}
\end{array}
$$
\caption{Different types of lifting of a framework corresponding to a torus.
\label{4tori.jpg-fig}}
\end{figure}
The monodromies of the upper-left torus are trivial.
Both monodromies of the lower-right torus are non-trivial. The remaining two tori have precisely one trivial monodromy.
\end{example}

\subsection{Particular example
}

Finally let us discuss an example of a lifting with one zero and one non-zero monodromy in details.

\begin{figure}[htb]
\includegraphics{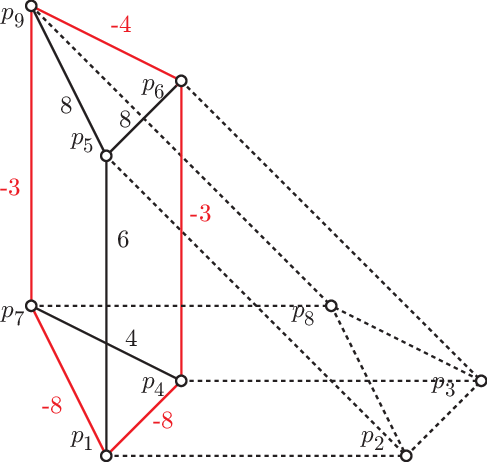}
\caption{Stresses on a triangular prism. The dotted lines are unstressed. The plane framework lifts to an infinite triangular prism
tube, corkscrewing out of the plane of the drawing.  The unstressed dotted edges are all horizontal, at levels 32 units apart from one another.
The stressed prism on the left lifts to a prism with two horizontal triangles, lifted $32$ units apart. \label{klein01fig}}
\end{figure}

\begin{example}
Consider the graph as on Figure~\ref{klein01fig} with vertices
$$
\begin{array}{ccc}
p_1=(0,0), & p_2=(4,0), & p_3=(5,1),\\
p_4=(1,1), & p_5=(0,4), & p_6=(1,5),\\
p_7=(-1,2), & p_8=(3,2), & p_9=(-1,6).
\end{array}
$$

Assume that we start with the face $f_0=p_1p_2p_3p_4$.
All the faces are as follows
$$
\begin{array}{ccc}
f_0=p_1p_2p_3p_4,& f_1=p_4p_7p_8p_3,& f_2=p_1p_7p_8p_2,\\
f_3=p_2p_5p_6p_3,& f_4=p_3p_8p_9p_6,& f_5=p_2p_8p_9p_5,\\
f_6=p_1p_4p_6p_5,& f_7=p_4p_7p_9p_6,& f_8=p_1p_5p_9p_7.\\
\end{array}
$$
Here the orders of vertices in faces are given with positive orientations.

Then we have
$$
\begin{array}{l}
\tau_{w,f_0}(f_0\cup f_1\cup f_2
\cup f_3\cup f_4\cup f_5)=\{32n|n\in \z\};
\\
\tau_{w,f_0}(f_6)=\{8x-8y+32n|n\in \z\};
\\
\tau_{w,f_0}(f_7)=\{-4x-8y+12+32n|n\in \z\};
\\
\tau_{w,f_0}(f_8)=\{-16x-8y+32n|n\in \z\}.
\\
\end{array}
$$

And we see that this lifting has one trivial monodromy along the meridian of the torus and one non-trivial monodromy along the longitude of the torus.

\end{example}


\bibliographystyle{plain}
    \bibliography{liftings}

\begin{thebibliography}{10}

\bibitem{borcea2014liftings}
Ciprian Borcea and Ileana Streinu.
\newblock Liftings and stresses for planar periodic frameworks.
\newblock In {\em Proceedings of the thirtieth annual symposium on
  Computational geometry}, pages 519--528, 2014.

\bibitem{BorStr}
Ciprian Borcea and Ileana Streinu.
\newblock Liftings and stresses for planar periodic frameworks.
\newblock {\em Discrete Comput. Geom.}, 53(4):747--782, 2015.

\bibitem{Con03}
Robert Connelly, Erik~D. Demaine, and G\"{u}nter Rote.
\newblock Straightening polygonal arcs and convexifying polygonal cycles.
\newblock {\em Discrete Comput. Geom.}, 30(2):205--239, 2003.

\bibitem{cooper}
Z.~Cooperband, R.~Ghrist, and J.~Hansen.
\newblock A cosheaf theory of reciprocal figures: Planar and higher genus
  graphic statics, 2023.
\newblock arXiv:2311.12946.

\bibitem{CW1993}
Henry Crapo and Walter Whiteley.
\newblock Autocontraintes planes et poly\`edres projet\'es. {I}. {L}e motif de
  base.
\newblock {\em Structural Topology}, (20):55--78, 1993.
\newblock Dual French-English text.

\bibitem{cremona1872figure}
Luigi Cremona.
\newblock {\em Le figure reciproche nella statica grafica del prof. L.
  Cremona}.
\newblock Tipografia di Giuseppe Bernardoni, 1872.

\bibitem{EadesGarvan}
Peter Eades and Patrick Garvan.
\newblock Drawing stressed planar graphs in three dimensions.
\newblock In {\em Graph drawing ({P}assau, 1995)}, volume 1027 of {\em Lecture
  Notes in Comput. Sci.}, pages 212--223. Springer, Berlin, 1996.

\bibitem{ErLin}
Jeff Erickson and Patrick Lin.
\newblock A toroidal {M}axwell-{C}remona-{D}elaunay correspondence.
\newblock {\em J. Comput. Geom.}, 12(2):55--85, 2021.

\bibitem{MR2514615}
Bill Jackson and Tibor Jord{\'a}n.
\newblock A sufficient connectivity condition for generic rigidity in the
  plane.
\newblock {\em Discrete Appl. Math.}, 157(8):1965--1968, 2009.

\bibitem{KFMS2023}
O.~Karpenkov, F.~Mohammadiand, Christian M{\"u}ller, and Bernd Schulze.
\newblock A differential approach to maxwell-cremona liftings, 2023.
\newblock arXiv:2312.09891.

\bibitem{KFMS2024}
O.~Karpenkov, F.~Mohammadiand, Christian M{\"u}ller, and Bernd Schulze.
\newblock Klein-arnold tensegrities, 2024.
\newblock arXiv:2410.12729.

\bibitem{maxwell1864xlv}
J.~Clerk Maxwell.
\newblock On reciprocal figures and diagrams of forces.
\newblock {\em The London, Edinburgh, and Dublin Philosophical Magazine and
  Journal of Science}, 27(182):250--261, 1864.

\bibitem{maxwell1870reciprocal}
J.~Clerk Maxwell.
\newblock I. {O}n reciprocal figures, frames, and diagrams of forces.
\newblock {\em Earth and Environmental Science Transactions of the Royal
  Society of Edinburgh}, 26(1):1--40, 1870.

\bibitem{Pak}
Igor Pak.
\newblock {\em Lectures on Discrete and Polyhedral Geometry}.
\newblock preprint, https://www.math.ucla.edu/~pak/geompol8.pdf, 2010.

\bibitem{richter2006realization}
J{\"u}rgen Richter-Gebert.
\newblock {\em Realization spaces of polytopes}.
\newblock Springer, 1996.

\bibitem{Whiteley1982}
Walter Whiteley.
\newblock Motions and stresses of projected polyhedra.
\newblock {\em Structural Topology}, (7):13--38, 1982.
\newblock With a French translation.

\end{thebibliography}

\vspace{5mm}

\end{document}